\def\en{\mathbb N}
\def\rn{\mathbb R^N}
\def\er{\mathbb R}
\def\dom{\operatorname{dom}}
\def\Lip{\operatorname{Lip}}
\newtheorem{Theorem}{Theorem}[section]
\newtheorem{Proposition}[Theorem]{Proposition}
\newtheorem{Lemma}[Theorem]{Lemma}
\newtheorem{Corollary}[Theorem]{Corollary}
\theoremstyle{definition}
\theoremstyle{definition}
\theoremstyle{remark}
\def \cal{\mathcal}
\def\Lip{\text{Lip}}
\def\eps{\varepsilon}
\def\Ndb{\mathbb N}
\def\Rdb{\mathbb R}
\def\eps{\varepsilon}
\def\Lip{\text{Lip}}
\newcommand{\bib}{\bibitem}
\begin{document}

\title{Approximation properties and Schauder decompositions in Lipschitz-free spaces}

\author{G. Lancien}

\address{Universit\'{e} de Franche-Comt\'{e}, Laboratoire de Math\'{e}matiques UMR 6623,
16 route de Gray, 25030 Besan\c{c}on Cedex, FRANCE.}
\email{gilles.lancien@univ-fcomte.fr}

\author{E. Perneck\'{a}*}
\address{*Department of Mathematical Analysis, Faculty of Mathematics and Physics, Charles University, Sokolovsk\'{a} 83, 186 75 Praha 8, Czech Republic.}
\address{*Universit\'{e} de Franche-Comt\'{e}, Laboratoire de Math\'{e}matiques UMR 6623,
16 route de Gray, 25030 Besan\c{c}on Cedex, FRANCE.}
\email{pernecka@karlin.mff.cuni.cz}



\subjclass[2010]{Primary 46B20; Secondary 46B80 }
\thanks{Both authors were partially supported by the P.H.C. Barrande 2012-26516YG. The second author was also supported by grants SVV-2012-265316, GA\v{C}R P201/11/0345 and RVO: 67985840.}

\maketitle

\begin{abstract}
We prove that the Lipschitz-free space over a doubling metric space has the bounded approximation
property. We also show that the Lipschitz-free spaces over $\ell_1^N$  or $\ell_1$ have monotone finite-dimensional Schauder decompositions.
\end{abstract}

\markboth{}{}

\section{Introduction}

 For $(M_1,d_1)$ and $(M_2,d_2)$ metric spaces and $f:M_1\to M_2$, we denote by $\Lip(f)$ the Lipschitz constant of $f$ given by $$\Lip(f)=\sup\left\{\frac{d_2(f(x),f(y))}{d_1(x,y)},\ x,y\in M_1,\ x\neq y\right\}.$$
Consider $(M,d)$ a \emph{pointed} metric space, i.e. a metric space equipped with a
distinguished element (origin) denoted $0$. Then, the space $\Lip_0(M)$ of all real-valued
Lipschitz functions $f$ on $M$ which satisfy $f(0)=0$, endowed with the norm
$$\|f\|_{\text{Lip}_0(M)}=\Lip(f)$$
is a Banach space. The Dirac map $\delta : M \to \text{Lip}_0(M)^*$ defined by $\langle g, \delta(p)\rangle =g(p)$
for $g\in \text{Lip}_0(M)$ and $p\in M$ is an isometric
embedding from $M$ into $\text{Lip}_0(M)^*$. The closed linear span of $\{\delta(p),\ p\in M\}$ is denoted
$\cal F (M)$ and called the \emph{Lipschitz-free space over} $M$ (or free space in short).
It follows from the compactness of the unit ball of $\text{Lip}_0(M)$ with respect to the topology of
pointwise convergence, that $\cal F (M)$ can be seen as the canonical predual of $\text{Lip}_0(M)$.
Then the weak$^*$-topology induced by $\cal F (M)$ on $\text{Lip}_0(M)$ coincides with the topology of
pointwise convergence on the bounded subsets of $\text{Lip}_0(M)$. Lipschitz-free spaces are a very
useful tool for abstractly linearizing Lipschitz maps. Indeed, if we identify through the Dirac map a
metric space $M$ with a subset of $\cal F (M)$, then any Lipschitz map from the metric space $M$ to a metric space
$N$ extends to a continuous linear map from $\cal F (M)$ to $\cal F (N)$ with the same Lipschitz constant (see \cite{W}
or Lemma 2.2 in \cite{GK}).
A comprehensive reference for the basic theory of the spaces of Lipschitz functions and their preduals, which
are called Arens-Eells spaces there, is the book \cite{W} by Weaver.

Despite the simplicity of their definition, very little is known about the linear structure of Lipschitz-free spaces over separable metric spaces. It is easy to see that $\cal F (\Rdb)$ is isometric to $L_1$.
However, adapting a theorem of Kislyakov \cite{Ki}, Naor and Schechtman proved in \cite{NS} that $\cal F (\Rdb^2)$ is not isomorphic to any subspace
of $L_1$. Then the metric spaces whose Lipschitz-free space is isometric to a subspace of $L_1$ have been
characterized by Godard in \cite{Gd}.

The aim of this paper is to study metric spaces $M$ such that $\cal F (M)$ has the bounded approximation
property (BAP) or admits a finite-dimensional Schauder decomposition (FDD). This kind of study was initiated
in the fundamental paper by Godefroy and Kalton \cite{GK}, where they proved that a Banach space $X$ has the
$\lambda$-BAP if and only if $\cal F (X)$ has the $\lambda$-BAP. In particular, for any finite dimensional
Banach space $E$, $\cal F (E)$ has the metric approximation property (MAP). Another major result from \cite{GK}
is that any separable Banach space has the so-called isometric lifting property. Refining the techniques used in
the proof of this result, Godefroy and Ozawa have proved in their recent work \cite{GO} that any separable Banach
space failing the BAP contains a compact subset whose Lipschitz-free space also fails the BAP. It is then natural,
as it is suggested in \cite{GO}, to try to describe the metric spaces whose Lipschitz-free space has BAP.
We address this question in Section 2. Our main result of this section (Corollary \ref{BAP for F(M)}) is that for
any doubling metric space $M$, the Lipschitz-free space $\cal F (M)$ has the BAP.

Then we try to find the Banach spaces such that the corresponding Lipschitz-free spaces have stronger approximation properties. The first result in this direction is due to Borel-Mathurin \cite{BoM}, who proved that $\cal F (\Rdb^N)$ admits a finite-dimensional Schauder decomposition. The decomposition constant obtained in \cite{BoM} depends on the dimension $N$. In Section 3 we show that $\cal F (\ell_1^N)$ and $\cal F(\ell_1)$ admit a monotone finite-dimensional Schauder decomposition. For that purpose, we use a particular technique for interpolating Lipschitz functions on hypercubes of $\Rdb^N$.

\section{Bounded approximation property for Lipschitz-free spaces and gentle partitions of unity}
\label{sec BAP for F(M)}

We first recall the definition of the bounded approximation property.

Let $1\leq\lambda<\infty$. A Banach space $X$ has the \emph{$\lambda$-bounded approximation property}
($\lambda$-BAP) if, for every $\varepsilon>0$ and every compact set $K\subset X$, there is a bounded finite-rank
linear operator $T:X\to X$ with $\|T\|\leq\lambda$ and such that  $\|T(x)-x\|\leq\varepsilon$ whenever $x\in K$. We say that
$X$ has the BAP if it has the $\lambda-$BAP for some $1\leq\lambda<\infty$.

Obviously, if there is a sequence of uniformly bounded finite-rank linear operators on a Banach space $X$ converging
in the strong operator topology to the identity on $X$, then $X$ has the BAP. For further information on the
approximation properties of Banach spaces we refer the reader to \cite{LT} or \cite{FHHMZ}.

\medskip We now detail a construction due to Lee and Naor \cite{LN} that we shall use. Let $(Y,d)$ be a metric space,
$X$ a closed subset of $Y$, $(\Omega, \Sigma,\mu)$ a measure space and $K>0$. Following \cite{LN} we say that a
function $\psi: \Omega \times Y \to [0,\infty)$ is a \emph{$K$-gentle partition of unity of $Y$ with respect to $X$}
if it satisfies the following:

\noindent (i) For all $x\in Y \setminus X$, the function $\psi_x:\omega \mapsto \psi(\omega,x)$ is in
$L_1(\mu)$ and $\|\psi_x\|_{L_1(\mu)}=1$.

\noindent (ii) For all $\omega \in \Omega$ and all $x$ in $X$, $\psi(\omega,x)=0$.

\noindent (iii) There exists a Borel measurable function $\gamma:\Omega \to X$ such that for all $x,y \in Y$
$$\int_\Omega |\psi(\omega,x)-\psi(\omega,y)|d(\gamma(\omega),x) \,d\mu(\omega) \le Kd(x,y).$$
Then, for $Y$ having a $K$-gentle partition of unity with respect to a separable subset $X$ and for $f$ Lipschitz on $X$, Lee and Naor define $E(f)$ by $E(f)(x)=f(x)$ if $x\in X$ and
$$E(f)(x)=\int_\Omega f(\gamma(\omega))\psi(\omega,x)\,d\mu(\omega)\ \ \ {\rm if}\ x\in Y\setminus X$$
and show that $\Lip(E(f)) \le 3K\Lip(f)$ (\cite{LN} Lemma 2.1).

\medskip Our general result is then the following.

\begin{Theorem}\label{BAPgentle} Let $(M,d)$ be a pointed separable metric space such that there exists a constant $K>0$ so that for any closed subset $X$ of $M$, $M$ admits a $K$-gentle partition of unity with respect to $X$. Then $\cal F (M)$ has the $3K$-BAP.
\end{Theorem}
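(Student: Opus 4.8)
The strategy is to build, for each $\varepsilon>0$ and each compact $K\subset\cal F(M)$, a finite-rank operator $T$ on $\cal F(M)$ with $\|T\|\le 3K$ that is close to the identity on $K$. Since $M$ is separable, fix a countable dense set $\{x_n\}$ in $M$ (with $x_1=0$), and let $X_n$ be a finite $\tfrac1n$-net of the ball $B(0,n)$ in $M$ together with $0$; then $X_n$ is a finite subset of $M$ and $\bigcup_n X_n$ is dense in $M$. By hypothesis $M$ admits a $K$-gentle partition of unity with respect to each (closed, here finite) $X_n$, so Lee and Naor's construction furnishes an extension operator $E_n:\Lip_0(X_n)\to\Lip_0(M)$ with $\Lip(E_nf)\le 3K\Lip(f)$ and $E_nf=f$ on $X_n$. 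Note $E_n$ is linear (it is an integral against $f$ composed with the fixed map $\gamma$). The first step is to record that $E_n$ is weak$^*$-to-weak$^*$ continuous: $E_nf(x)$ depends pointwise-continuously on $f$, and on bounded sets the weak$^*$ topology on $\Lip_0$ is the topology of pointwise convergence, so $E_n$ is the adjoint of a bounded operator $\widehat E_n:\cal F(M)\to\cal F(X_n)$.

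The second step is to identify $\widehat E_n$ concretely. The restriction map $R_n:\Lip_0(M)\to\Lip_0(X_n)$ is weak$^*$-continuous and its predual is the canonical inclusion $\iota_n:\cal F(X_n)\hookrightarrow\cal F(M)$ (this is standard: restriction is the adjoint of the map sending $\delta(p)\mapsto\delta(p)$ for $p\in X_n$). Since $E_n$ is a right inverse of $R_n$ (because $E_nf=f$ on $X_n$), the predual operator $\widehat E_n:\cal F(M)\to\cal F(X_n)$ is a left inverse of $\iota_n$, i.e. a projection-like map onto the finite-dimensional space $\cal F(X_n)$. Set $T_n=\iota_n\circ\widehat E_n:\cal F(M)\to\cal F(M)$. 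Then $T_n$ has finite rank (its range lies in $\cal F(X_n)$, which is finite-dimensional since $X_n$ is finite), and $\|T_n\|=\|\widehat E_n\|=\|E_n\|\le 3K$ because taking adjoints/preduals preserves the norm. This gives a uniformly bounded sequence of finite-rank operators; it remains to show $T_n\to \id$ in the strong operator topology.

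The third step is the convergence. By the uniform bound $\|T_n\|\le 3K$ and a standard density argument, it suffices to check $T_n\mu\to\mu$ for $\mu$ in a dense subset of $\cal F(M)$, and the finitely supported molecules — finite linear combinations of $\delta(p)$, $p\in M$ — are dense. For a fixed $p\in M$ and $g\in\Lip_0(M)$ we have $\langle g,T_n\delta(p)\rangle=\langle E_ng,\delta(p)\rangle=(E_ng)(p)$; and if $p_n\in X_n$ is chosen with $d(p,p_n)\le\tfrac1n$ (possible for $n$ large once $p\in B(0,n)$), then $\langle g,\iota_n\delta(p_n)\rangle=g(p_n)$, so $|\langle g,T_n\delta(p)-\delta(p_n)\rangle|=|(E_ng)(p)-(E_ng)(p_n)|\le 3K\|g\|\,d(p,p_n)\le 3K\|g\|/n$. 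Hence $\|T_n\delta(p)-\delta(p_n)\|\le 3K/n$, while $\|\delta(p_n)-\delta(p)\|=d(p_n,p)\le 1/n$, so $T_n\delta(p)\to\delta(p)$; by linearity the same holds for finitely supported molecules, and the uniform boundedness finishes the argument. Therefore $\cal F(M)$ has the $3K$-BAP.

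The main obstacle — and the only point needing genuine care — is the passage from the extension operator $E_n$ on $\Lip_0$ to its predual $\widehat E_n$ on $\cal F$: one must verify that $E_n$ is weak$^*$-continuous (equivalently, continuous for pointwise convergence on bounded sets, which follows from dominated convergence using integrability of $\psi_x$ in condition (i) of the gentle partition of unity) so that it is indeed an adjoint operator, and one must correctly match the preduals of $R_n$ and of $E_n$ so that $T_n$ genuinely has range in the finite-dimensional space $\cal F(X_n)$. Everything else is routine.
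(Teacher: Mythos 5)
Your proposal follows the same route as the paper: restrict to an increasing sequence of finite subsets $X_n$, extend back via the Lee--Naor operator $E_n$, check that $E_nR_n$ is weak$^*$-to-weak$^*$ continuous so that it is the adjoint of a finite-rank operator $T_n$ on $\cal F(M)$ with $\|T_n\|\le 3K$. One point in your write-up is actually an improvement over the paper. The paper only proves that $S_n(f)=E_nR_nf\to f$ pointwise, which gives convergence of $T_n$ to the identity in the \emph{weak} operator topology, and must then invoke Mazur's lemma and a diagonal argument to replace the $T_n$ by convex combinations converging in the strong operator topology. You instead prove the norm estimate $\|T_n\delta(p)-\delta(p)\|\le (3K+1)\,d(p,p_n)$ directly (using that $E_nR_ng$ agrees with $g$ on $X_n$ and is $3K\|g\|$-Lipschitz), which together with the density of finitely supported molecules and the uniform bound $\|T_n\|\le 3K$ yields strong operator convergence of the $T_n$ themselves; this bypasses the Mazur step entirely and is cleaner.

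There is, however, one genuine slip you must repair: you take $X_n$ to be ``a finite $\tfrac1n$-net of the ball $B(0,n)$,'' but balls in a separable metric space need not be totally bounded (e.g.\ the unit ball of $\ell_2$ admits no finite $\tfrac12$-net), so such an $X_n$ may simply not exist. Nothing in your argument actually uses the net property uniformly over the ball; all you need is that each $X_n$ is finite, contains $0$, and that for every $p\in M$ there are $p_n\in X_n$ with $d(p,p_n)\to 0$. So replace your $X_n$ by $\{0,x_1,\dots,x_n\}$ for a dense sequence $(x_n)$ --- as the paper does, and as your own opening sentence already sets up --- and choose $p_n\in X_n$ with $d(p,p_n)\to 0$ along the (now not uniformly quantified) convergence; the rest of your estimate goes through verbatim. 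With that correction the proof is complete.
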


\begin{proof} Our objective is to find a sequence of finite-rank linear operators on $\mathcal{F}(M)$ with norms bounded by $3K$ and
converging to the identity on $\mathcal{F}(M)$ in the strong operator topology. To this end, we first construct
a sequence of operators of appropriate qualities on the dual space $\Lip_0(M)$ so that they are adjoint operators and then pass
to $\mathcal{F}(M)$. To be more precise, we build  a sequence $(S_n)_{n=1}^\infty$ of $3K$-bounded finite-rank linear
operators on $\Lip_0(M)$ that are pointwise continuous on bounded subsets of $\Lip_0(M)$ and such that for all
$f\in \Lip_0(M)$, $(S_n(f))_{n=1}^\infty$ converges pointwise to $f$. This will imply that $S_n=T_n^*$, where $T_n$ is a finite-rank operator on $\cal F (M)$ such that $(T_n)_{n=1}^\infty$ is converging to the identity for the weak operator topology on $\cal F(M)$. Then the separability of $\cal F (M)$, Mazur's Lemma and a standard diagonal argument will yield the existence of a bounded sequence of finite-rank operators converging to the identity for the strong operator topology on $\cal F (M)$. Note that the operators obtained in this last step are made of convex combinations of the $T_n$'s. This preserves our control on their norms.

So, let $(x_n)_{n=1}^\infty$ be a dense sequence in $M$ and $0$ be the origin of $M$. Put $X_n=\{0,x_1,..,x_n\}$. For $f\in \Lip_0(M)$ we denote $R_n(f)$ the restriction of $f$ to $X_n$. The operator $R_n$, defined from $\Lip_0(M)$ to
$\Lip_0(X_n)$, is clearly of finite rank, pointwise continuous and such that $\|R_n\|\le
1$.

Thanks to our assumption that $M$ admits a $K$-gentle partition of unity with respect to $X_n$, we can apply Lee and Naor's construction to obtain an extension operator $E_n$ from $\Lip_0(X_n)$ to $\Lip_0(M)$. Note that it follows immediately from the definition of $E_n$ and Lebesgue's dominated convergence theorem that $E_n$ is pointwise continuous on bounded subsets of $\Lip_0(X_n)$.

Finally, we set $S_n=E_nR_n$. The sequence $(S_n)_{n=1}^\infty$ is
indeed a sequence of  finite-rank linear operators from $\Lip_0(M)$
to $\Lip_0(M)$ that are pointwise continuous on bounded subsets of $\Lip_0(M)$ and so that
$\|S_n\|\le 3K$ for all $n\in \Ndb$. To finish the proof, we
only need to show that for any $f\in \Lip_0(M)$, the sequence $(S_n(f))_{n=1}^\infty$ converges
pointwise to $f$. So let us fix $x\in M$, $f\in \Lip_0(M)$ and $\eps>0$. Let $n_0\in \Ndb$ such that $d(x,x_{n_0})\le \eps$. Then, for any $n\ge n_0$,
$$|f(x)-f(x_{n_0})|\le \eps \|f\|_{\Lip_0(M)}\ \ {\rm and}\ \ |S_n(f)(x)-f(x_{n_0})|\le 3K\eps\|f\|_{\Lip_0(M)}.$$
Therefore
$$|S_n(f)(x)-f(x)|\le (1+3K)\eps\|f\|_{\Lip_0(M)}.$$
This concludes our proof.

\end{proof}

We recall that a metric space $(M,d)$ is called \emph{doubling} if there exists a constant $D(M)>0$
 such that any open ball $B(p,R)$ in $M$ can be covered with at most $D(M)$ open balls of radius $\frac R2$. We can now state the main application of Theorem \ref{BAPgentle}.

\begin{Corollary}
\label{BAP for F(M)}
Let $(M,d)$ be a pointed doubling metric space. Then the Lipschitz-free space $\mathcal{F}(M)$ has the
$C(1+\log(D(M)))$-BAP, where $C$ is a universal constant.
\end{Corollary}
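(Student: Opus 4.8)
The plan is to deduce Corollary \ref{BAP for F(M)} from Theorem \ref{BAPgentle} by showing that a doubling metric space $M$ admits, for \emph{every} closed subset $X$, a $K$-gentle partition of unity with respect to $X$, where $K = C(1+\log D(M))$ for a universal constant $C$; once this is in hand, Theorem \ref{BAPgentle} gives the $3K$-BAP, and absorbing the factor $3$ into $C$ finishes the proof. So the whole task reduces to a construction of gentle partitions of unity on doubling spaces.

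For this I would invoke the construction of Lee and Naor \cite{LN}, which is precisely designed to produce such partitions: given a closed subset $X$ of a doubling metric space $Y$, one builds a random (i.e., measure-indexed) family of functions $\psi(\omega, x)$ out of a random partition of $Y\setminus X$ into ``cells'' at dyadic scales comparable to $d(x,X)$, together with a choice of nearby base points $\gamma(\omega)\in X$. The doubling hypothesis enters to control, at each scale, the number of cells that can meet a given ball, and a telescoping sum over the dyadic scales produces the logarithmic dependence: one gets the Lipschitz-type estimate (iii) with constant $K\lesssim 1+\log D(M)$. Properties (i) (the $L_1(\mu)$-normalization $\|\psi_x\|_{L_1(\mu)}=1$) and (ii) (vanishing on $X$) are built into the construction by design. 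I would state this as a lemma citing \cite{LN} — in effect, ``every doubling metric space admits a $C(1+\log D(M))$-gentle partition of unity with respect to any closed subset'' — and then the corollary is immediate.

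The main obstacle, and the only part requiring genuine work, is the verification of estimate (iii) with the claimed logarithmic constant: one must show that the random partitioning scheme at scale $2^k$ has the property that the probability that two points $x,y$ land in different cells is $O(d(x,y)/2^k)$ up to the doubling factor, and then sum the contributions $\sum_k 2^{-k}\cdot(\text{stuff})$ against the weights $d(\gamma(\omega),x)\approx d(x,X)$ in a way that telescopes to $K\,d(x,y)$. This is exactly the content of the Lee--Naor padded-decomposition/partition-of-unity machinery, so rather than reproduce it I would cite it and only indicate how the parameters match: separability of $M$ guarantees we may run Lee and Naor's extension $E(f)$ on the separable subsets $X_n$ appearing in the proof of Theorem \ref{BAPgentle}, and the uniformity of $K$ over all closed subsets $X$ (it depends only on $D(M)$, not on $X$) is what Theorem \ref{BAPgentle} requires. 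With the lemma cited, the proof of the corollary is one line: apply Theorem \ref{BAPgentle} with this $K$ and rename the constant.
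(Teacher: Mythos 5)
Your proposal is correct and follows essentially the same route as the paper: both reduce the corollary to the existence of a $C(1+\log D(M))$-gentle partition of unity with respect to an arbitrary closed subset, obtained by citing the Lee--Naor construction (the paper points to Lemma 3.8, Corollary 3.12 and Theorem 4.1 of \cite{LN}), and then apply Theorem \ref{BAPgentle} directly. Your added sketch of the random-partition mechanism is accurate but, as you note, is content of \cite{LN} rather than new work.
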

\begin{proof} We combine some of the important results from \cite{LN}. Namely, it follows from Lemma 3.8, Corollary 3.12
and Theorem 4.1 in \cite{LN} that if $M$ is a doubling metric space and $X$ is a closed subset of $M$,
then $M$ admits a $K (1+\log (D(M)))$-gentle partition of unity with respect to $X$ (where $K$ is a universal constant).
The conclusion is now a direct application of Theorem \ref{BAPgentle}
\end{proof}

\noindent {\bf Remarks.}

1) Let us mention that an extension operator with these properties
could also be obtained from a later construction
due to A. and Y. Brudnyi in \cite{Br}, where they use the notion of metric space of homogeneous type.  A Borel measure $\mu$ on a metric space $(M,d)$
is called \emph{doubling} if the measure of every open ball
is strictly positive and finite and if there is a constant $\delta(\mu)>0$ such that
$\mu(B(p,2R))\leq\delta(\mu)\mu(B(p,R))$ for all $p\in M$ and $R>0$. A
metric space endowed with a doubling measure is said to be of \emph{homogeneous type}. Clearly, a space of homogeneous type is doubling. Conversely, Luukkainen and Saksman proved in \cite{LS} that every complete doubling metric space $(M,d)$
carries a doubling measure $\mu$ such that $\delta(\mu)\leq c(D(M))$, where $c(D(M))$ is a constant
depending only on $D(M)$. More on doubling metric spaces and spaces of homogeneous type can be found
in \cite{Se} and \cite{H}.

2) We refer the reader to Lee and Naor's paper \cite{LN} for other examples of metric spaces admitting $K$-gentle partitions of unity such as negatively curved manifolds, special graphs or surfaces of bounded genus.

\medskip Let us conclude this section with a few words on the Lipschitz-free spaces over subsets of $\Rdb^N$. It is easily checked that for $N\in\en$, the space $\rn$ with the Euclidean norm is a
doubling metric space with doubling constant bounded above by $K^N$, where $K$ is a universal constant. This property is inherited by metric subspaces. So, it follows from Corollary \ref{BAP for F(M)} that for any closed subset $F$ of the Euclidean space $\Rdb^N$, $\cal F (F)$ has the $CN$-BAP for some universal constant $C$. It turns out that a better result can be derived from \cite{LN} and \cite {GK}.

\begin{Proposition}
\label{F(F) is complemented in F(RN)}
Let $N\in\en$ and $F$ be a closed subset of $\er^N$ equipped with the Euclidean norm. Then the Lipschitz-free space $\mathcal{F}(F)$ is isometric to a $C\sqrt{N}$-complemented subspace of the Lipschitz-free space
$\mathcal{F}(\er^N)$. In particular, $\cal F (F)$ has the $C\sqrt{N}$-BAP.
\end{Proposition}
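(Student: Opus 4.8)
The plan is to produce a weak$^*$-to-weak$^*$ continuous linear extension operator $E\colon\Lip_0(F)\to\Lip_0(\er^N)$ of norm at most $C\sqrt N$ and then dualize it. As a preliminary, recall that the restriction operator $\rho\colon\Lip_0(\er^N)\to\Lip_0(F)$, $\rho(g)=g|_F$, is a metric surjection: by the McShane extension formula every real-valued $1$-Lipschitz function on $F$ extends to a $1$-Lipschitz function on $\er^N$, so $\rho$ maps the unit ball onto the unit ball. Moreover $\rho$ is weak$^*$-to-weak$^*$ continuous, so it has a pre-adjoint, which is precisely the canonical operator $\iota\colon\F(F)\to\F(\er^N)$ (satisfying $\iota^{*}=\rho$); being the pre-adjoint of a metric surjection, $\iota$ is an isometric embedding. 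This gives the isometric part of the statement, and it remains to build a projection of $\F(\er^N)$ onto $\iota(\F(F))$ of norm at most $C\sqrt N$.

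For the extension operator I would invoke the work of Lee and Naor, specialized to Euclidean space: for any closed $X\subseteq\er^N$ their construction yields a \emph{linear} extension operator $E\colon\Lip_0(X)\to\Lip_0(\er^N)$ with $(Ef)|_X=f$ and $\|E\|\le C\sqrt N$ (the generic doubling estimate behind Corollary \ref{BAP for F(M)} only gives the weaker constant $O(N)$ in this situation). Since $E$ arises from a gentle partition of unity via Lemma~2.1 of \cite{LN}, it is ``local'': for each $x\in\er^N$ the functional $f\mapsto(Ef)(x)$ on $\Lip_0(X)$ is represented by an element $\nu_x\in\F(X)$ --- namely $\nu_x=\delta(x)$ when $x\in X$, and $\nu_x=\int_\Omega\psi(\omega,x)\,\delta(\gamma(\omega))\,d\mu(\omega)$ when $x\notin X$, an absolutely convergent Bochner integral in $\F(X)$ because $\int_\Omega\psi(\omega,x)\,d(\gamma(\omega),0)\,d\mu(\omega)<\infty$ by properties (i) and (iii) of the gentle partition of unity. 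Just as in the proof of Theorem \ref{BAPgentle}, it follows that $E$ is weak$^*$-to-weak$^*$ continuous, hence $E=Q^{*}$ for a bounded operator $Q\colon\F(\er^N)\to\F(F)$ with $\|Q\|=\|E\|\le C\sqrt N$.

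Finally I would assemble the projection and deduce the BAP. Since $E$ is an extension operator, $\rho\circ E=\id_{\Lip_0(F)}$; taking adjoints and using $\rho=\iota^{*}$, $E=Q^{*}$ gives $(Q\circ\iota)^{*}=\id_{\Lip_0(F)}$, whence $Q\circ\iota=\id_{\F(F)}$. Therefore $P:=\iota\circ Q\colon\F(\er^N)\to\F(\er^N)$ satisfies $P^{2}=\iota\circ(Q\circ\iota)\circ Q=\iota\circ Q=P$, so it is a projection with range $\iota(\F(F))$ and $\|P\|\le\|\iota\|\,\|Q\|\le C\sqrt N$; thus $\F(F)\cong\iota(\F(F))$ is $C\sqrt N$-complemented in $\F(\er^N)$. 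For the last assertion, $\F(\er^N)$ has the metric approximation property, being the free space over a finite-dimensional Banach space (\cite{GK}), and a $\lambda$-complemented subspace of a space with the MAP has the $\lambda$-BAP --- compose the approximating finite-rank operators with the projection --- so $\F(F)$ has the $C\sqrt N$-BAP. The one genuinely hard ingredient is the bound $\|E\|\le C\sqrt N$, the sharp Johnson--Lindenstrauss--Schechtman-type extension estimate supplied by \cite{LN}; everything else is routine duality bookkeeping together with the McShane extension fact.
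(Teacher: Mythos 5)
Your proposal is correct and follows essentially the same route as the paper: restriction is the adjoint of an isometric embedding $J\colon\mathcal F(F)\to\mathcal F(\mathbb R^N)$, the Lee--Naor gentle-partition-of-unity construction gives a weak$^*$-continuous extension operator of norm $O(\sqrt N)$ whose pre-adjoint composed with $J$ is the desired projection, and the MAP of $\mathcal F(\mathbb R^N)$ from Godefroy--Kalton yields the BAP. The only (harmless) omission is the normalization $0\in F$ after translating, needed so that restriction maps $\Lip_0(\mathbb R^N)$ into $\Lip_0(F)$.
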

\begin{proof}
We may assume, after translating $F$, that $0\in F$. The restriction to $F$ defined from
$\Lip_0(\Rdb^N)$ to $\Lip_0(F)$ is the adjoint operator of an isometry $J$ from $\cal F (F)$
into $\cal F (\Rdb^N)$. We can now apply a more precise result on extensions of Lipschitz functions coming from \cite{LN}. Indeed, it follows from Lemma 3.16 and Theorem 4.1 in \cite{LN} that $\Rdb^N$ equipped with the Euclidean
norm admits a $K\sqrt{N}$-gentle partition of unity with respect to $F$, where $K$ is a universal constant. So, there exists a weak$^*$ to weak$^*$ continuous linear operator $E:\Lip_0(F)\to\Lip_0(\er^N)$ such that
$E(f)\arrowvert_F=f$ for every $f\in\Lip_0(F)$ and  $\|E\|\le 3K\sqrt{N}$. Due to the weak$^*$-continuity of $E$, there exists a
bounded linear operator $P:\mathcal{F}(\er^N)\to\mathcal{F}(F)$ satisfying $P^*=E$. Moreover, thanks to the fact
that $E$ is an extension operator and by the Hahn-Banach theorem, $JP(\mu)=\mu$ for every $\mu\in J(\mathcal{F}(F))$.
Hence $JP$ is a linear projection from $\mathcal{F}(\er^N)$ onto $J(\mathcal{F}(F))$ such that $\|JP\|\le 3K\sqrt{N}$. This shows that $\mathcal{F}(F)$ is isometric to a $C\sqrt{N}$-complemented subspace of
$\mathcal{F}(\er^N)$, where $C$ is a universal constant. On the other hand, it is proved in \cite{GK} that $\cal F(\Rdb^N)$ has the MAP.
Therefore $\cal F(F)$ has the $C\sqrt{N}$-BAP.
\end{proof}

\section{Finite-dimensional Schauder decomposition of the Lipschitz-free space $\mathcal{F}(\ell_1)$}
\label{FDD of F(l_1)}

We recall the notion of finite-dimensional Schauder decomposition following the monograph of Lindenstrauss and Tzafriri \cite{LT}.

Let $X$ be a Banach space. A sequence $(X_n)_{n=1}^\infty$ of finite-dimensional subspaces of $X$ is called a \emph{finite-dimensional Schauder decomposition} of $X$ (FDD) if every $x\in X$ has a unique representation of the form $x=\sum_{n=1}^\infty x_n$ with $x_n\in X_n$ for every $n\in\en$.

If $(S_n)_{n=0}^\infty$, where $S_0\equiv0$, is a sequence of projections on $X$ satisfying $S_nS_m=S_{\min\{m,n\}}$ such that $0<\dim(S_{n}-S_{n-1})(X)<\infty$ and converging in the strong operator topology to the identity on $X$, then $(X_n)_{n=1}^\infty=\big((S_n-S_{n-1})(X)\big)_{n=1}^\infty$ is an FDD of $X$, for which the $S_n$'s are the partial sum projections. Then the sequence $(S_n)_{n=1}^\infty$ is bounded and $K=\sup_{n\in \Ndb} \|S_n\|$ is called the \emph{decomposition constant}. If $K=1$, then the decomposition is called \emph{monotone}.

\medskip For $N\in \Ndb$, the space $\Rdb^N$ equipped with the norm $\|x\|_{1}=\sum_{i=1}^N|x_i|$ is denoted $\ell_1^N$. The space $\left\{x=(x_i)_{i=1}^\infty \in \Rdb^\Ndb,\ \sum_{i=1}^\infty|x_i|<\infty\right\}$ equipped with the norm $\|x\|_{1}=\sum_{i=1}^\infty|x_i|$ is denoted $\ell_1$. We write $\mathbf{0^N}$ for the origin in $\ell_1^N$ and $\mathbf{0}$ for the origin in $\ell_1$. Our result is the following.

\begin{Theorem}\label{fdd}
The Lipschitz-free spaces $\mathcal{F}(\ell_1)$ and $\mathcal{F}(\ell_1^N)$ admit
monotone finite-dimensional Schauder decompositions.
\end{Theorem}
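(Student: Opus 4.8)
The plan is to build an explicit monotone FDD of $\mathcal F(\ell_1^N)$ coming from a nested sequence of natural subsets of $\ell_1^N$, and then to combine the finite-dimensional case with a limiting argument to handle $\mathcal F(\ell_1)$. As with the BAP argument in Section~\ref{sec BAP for F(M)}, everything will be done on the predual side: I will construct norm-one extension operators $E_k\colon\Lip_0(X_k)\to\Lip_0(\ell_1^N)$ (for a carefully chosen increasing exhaustion $X_0\subset X_1\subset\cdots$ of $\ell_1^N$ by finite or finite-dimensional pieces) which are weak$^*$-to-weak$^*$ continuous, satisfy $E_k(f)|_{X_k}=f$, and — this is the crucial point — are \emph{compatible}, meaning $E_k$ applied to $E_j(g)|_{X_k}$ equals $E_j(g)$ for $j\le k$. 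Setting $S_k=(E_k R_k)^*$, where $R_k$ is the restriction map to $X_k$, the compatibility together with $\|E_k\|=1$ will give projections with $S_kS_m=S_{\min\{k,m\}}$, $\|S_k\|=1$, and $S_k\to\id$ strongly; then $\big((S_k-S_{k-1})\mathcal F(\ell_1^N)\big)_k$ is the desired monotone FDD, after regrouping consecutive terms so that each summand is nonzero and finite-dimensional.

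The natural exhaustion to use is by the integer grid: let $X_k$ be the scaled lattice $2^{-k}\Zdb^N$ (intersected with a growing ball, or better, organized dyadically so that $X_k\subset X_{k+1}$), so that $\ell_1^N$ is filled out by finer and finer cubical meshes. The key technical ingredient — and what Section~3's preamble promises — is a \emph{linear} interpolation scheme that, given the values of a Lipschitz function on the vertices of a mesh of $\ell_1^N$-cubes, produces a Lipschitz extension to all of $\ell_1^N$ with no loss in the Lipschitz constant when the norm is the $\ell_1$-norm, and which is consistent across refinements of the mesh (so that interpolating a coarse interpolant reproduces it). The hard part will be exhibiting this interpolation operator and verifying $\Lip(E_k f)=\Lip(f)$ with respect to $\|\cdot\|_1$: on a single cube $[0,1]^N$ one can use a multilinear (tensor-product) interpolation of the vertex values, or the ``iterated one-dimensional'' interpolation, and the miracle is that because the $\ell_1$-ball is a \emph{cross-polytope} whose extreme structure meshes with the cube decomposition, the Lipschitz constant is controlled coordinatewise and the constant $1$ survives. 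I expect the bulk of the work to be a careful combinatorial/inductive estimate showing that the candidate interpolant does not increase the $\ell_1$-Lipschitz constant, handling both points inside a common cube and points in different cubes of the mesh, and checking the refinement-consistency that yields $S_kS_m=S_{\min\{k,m\}}$.

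Once the monotone FDD of $\mathcal F(\ell_1^N)$ is in hand, $\mathcal F(\ell_1)$ follows by a compatibility-across-dimensions argument: the inclusions $\ell_1^N\hookrightarrow\ell_1^{N+1}\hookrightarrow\ell_1$ (as initial segments) induce, via the restriction maps on $\Lip_0$, isometric embeddings $\mathcal F(\ell_1^N)\hookrightarrow\mathcal F(\ell_1^{N+1})\hookrightarrow\mathcal F(\ell_1)$, and $\bigcup_N \ell_1^N$ is dense in $\ell_1$, so $\overline{\bigcup_N\mathcal F(\ell_1^N)}=\mathcal F(\ell_1)$. If the finite-dimensional decompositions are built from the same dyadic-grid recipe, the partial-sum projections will be compatible with these embeddings, and a diagonal combination of the doubly-indexed family $(S_k$ on $\mathcal F(\ell_1^N))$ will produce a single monotone FDD of $\mathcal F(\ell_1)$ — here one should check that the natural enumeration still yields nonzero finite-dimensional summands and a monotone (norm-one) sequence of partial-sum projections, which follows because each $S_k$ already has norm $1$ and the projections commute as required.
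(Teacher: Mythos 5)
Your proposal follows essentially the same route as the paper: the paper's proof constructs exactly such norm-one, weak$^*$-to-weak$^*$ continuous, mutually compatible operators $Q_n=S_n^*$ on $\Lip_0(\ell_1)$ by composing restriction to the first $n$ coordinates, the $1$-Lipschitz nearest-point retraction onto the cube $C(\mathbf{0^n},2^n)$, and the iterated one-dimensional interpolation $\Lambda$ on a dyadic mesh of subcubes, whose preservation of the $\ell_1$-Lipschitz constant is the content of Lemma \ref{Lip constant of Lambda} and whose refinement-consistency is Lemma \ref{af}. The only packaging difference is that the paper runs the dimension, the mesh size and the size of the large cube along a single diagonal from the outset rather than treating $\ell_1^N$ completely first, which is precisely the ``diagonal combination of the doubly-indexed family'' you anticipate.
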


Let $X$ be $\ell_1$ or $\ell_1^N$. It follows from the classical theory that we only need to build a sequence of contractive finite-rank linear projections $(S_n)_{n=1}^\infty$ on $\cal F(X)$ such that
$S_nS_m=S_{\min\{m,n\}}$ for all $m,n\in\en$ and that $\overline{\bigcup\limits_{n=1}^{\infty}S_n(\cal F(X))}=\cal F(X)$.
As in the proof of Theorem \ref{BAPgentle} we shall work on the dual space and construct a sequence of contractive
weak$^*$ to weak$^*$ continuous finite-rank linear projections on $\Lip_0(X)$,
possessing the commuting property and converging to the identity on $\Lip_0(X)$ in
the weak$^*$-operator topology. The general idea will be to take an increasing sequence of closed
bounded subsets of $X$ and associate with each of these sets a finite-rank linear operator on
$\Lip_0(X)$ so that the image of a function under this operator has values close to the values of the
original function at the points of the considered closed set. However, unlike the situation in our previous section, we have the linear structure of the metric space $X$ at our disposal. This will
enable us to work accurately enough to obtain a monotone FDD for $\mathcal{F}(X)$.

\subsection{Notation and interpolation Lemma}
\label{Notation}

Put $\en_0=\en\cup\{0\}$ and fix $N\in\en$. We denote by $C\left(y,R\right)$, where $y\in\rn$ and $0<R<\infty$, the hypercube
$$C(y,R)=\left\{x\in\rn,\ \sup_{1\le i\le N}|x_i-y_i|\leq\frac R2\right\}.$$
For $y\in\rn$, $0<R<\infty$ and $\delta\in\{-1,1\}^N$, the symbol $A_\delta(y,R)$ stands for the vertex $y+\frac R2\delta$ of the hypercube $C\left(y,R\right)$.

The following interpolation on $C(y,R)$ of a function defined on its vertices will be the crucial tool for our proof. Let $y\in\rn$, $0<R<\infty$, $x\in C\left(y,R\right)$ and let $f:\dom(f)\to\er$ satisfy $\left\{A_\delta\left(y,R\right),\ \delta\in\{-1,1\}^N\right\}\subset \dom(f)\subset\rn$. We define inductively:
\begin{align*}
\Lambda_{\gamma}\left(f,\,C\left(y,R\right)\right)(x)=&\frac{x_1-y_1+\frac R2}{R}f\left(A_{(1,\gamma_1,\dots,\gamma_{N-1})}\left(y,R\right)\right)\\
&+\left(1-\frac{x_1-y_1+\frac R2}{R}\right)f\left(A_{(-1,\gamma_1,\dots,\gamma_{N-1})}\left(y,R\right)\right)
\end{align*}
for each $\gamma=(\gamma_1,\dots,\gamma_{N-1})\in\{-1,1\}^{N-1}$,
\begin{align*}
\Lambda_{\gamma}\left(f,\,C\left(y,R\right)\right)(x)=&\frac{x_j-y_j+\frac R2}{R}\Lambda_{(1,\gamma_1,\dots,\gamma_{N-j})}\left(f,\,C\left(y,R\right)\right)(x)\\
&+\left(1-\frac{x_j-y_j+\frac R2}{R}\right)\Lambda_{(-1,\gamma_1,\dots,\gamma_{N-j})}\left(f,\,C\left(y,R\right)\right)(x)
\end{align*}
for each $j\in\{2,\dots,N-1\}$ and $\gamma=(\gamma_1,\dots,\gamma_{N-j})\in\{-1,1\}^{N-j}$,
and
\begin{align}
\label{Lambda}
\Lambda\left(f,\,C\left(y,R\right)\right)(x)=&\frac{x_N-y_N+\frac R2}{R}\Lambda_{(1)}\left(f,\,C\left(y,R\right)\right)(x)\\\nonumber
&+\left(1-\frac{x_N-y_N+\frac R2}{R}\right)\Lambda_{(-1)}\left(f,\,C\left(y,R\right)\right)(x).\nonumber
\end{align}
Let us use the following convention: $\{-1,1\}^0:=\{\emptyset\}$ and $\Lambda_\emptyset=\Lambda$.

Let $I_1,\dots,I_N$ be closed intervals in $\er$. We shall say that a function $\Phi: I_1\times\dots\times I_N\to\er$ has the property (AF) on $I_1\times\dots\times I_N\subset\rn$ if its restriction to any segment lying in $I_1\times\dots\times I_N$ and parallel to one of the coordinate axes is affine. A function having the property (AF) on $I_1\times\dots\times I_N$ is uniquely determined by its values at the vertices of $I_1\times\dots\times I_N$. Observe that $\Lambda\left(f,\,C\left(y,R\right)\right)$ has the property (AF) on $C(y,R)$ and coincides with the function $f$ at the vertices of $C\left(y,R\right)$.

We now state and prove our basic interpolation lemma.

\begin{Lemma}\label{Lip constant of Lambda} Let $y\in\rn$, $0<R<\infty$ and let $f:\dom(f)\to\er$ be a function satisfying  $\left\{A_\delta\left(y,R\right),\ \delta\in\{-1,1\}^N\right\}\subset \dom(f)\subset\rn$. Recall that $\Rdb^N$ is equipped with the $\ell_1$-norm. Then
$$
{\rm Lip}\left(\Lambda\left(f,\,C\left(y,R\right)\right)\right)= {\rm Lip}\left(f|_{\left\{A_\delta\left(y,R\right),\ \delta\in\{-1,1\}^N\right\}}\right).
$$
\end{Lemma}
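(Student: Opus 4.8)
The plan is to prove the two inequalities separately. The inequality $\mathrm{Lip}(f|_V) \le \mathrm{Lip}(\Lambda(f,C(y,R)))$, where $V = \{A_\delta(y,R),\ \delta\in\{-1,1\}^N\}$ denotes the vertex set, is immediate: by construction $\Lambda(f,C(y,R))$ agrees with $f$ at every point of $V$, so the Lipschitz constant of $f$ restricted to $V$ cannot exceed that of $\Lambda(f,C(y,R))$ on all of $C(y,R)$ (and the vertices lie in $C(y,R)$). The substance of the lemma is the reverse inequality, namely that the affine-on-segments interpolant does not increase the Lipschitz constant measured with respect to the $\ell_1$-norm.

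For the reverse inequality, I would first reduce to estimating the Lipschitz constant along coordinate directions. Since $\Lambda(f,C(y,R))$ has property (AF), on each segment parallel to a coordinate axis it is affine; I claim it suffices to bound the slope of each such restriction by $L := \mathrm{Lip}(f|_V)$. Indeed, if $\Phi$ has property (AF) on a box and every axis-parallel restriction has slope at most $L$ in absolute value, then for arbitrary $x,x' $ in the box one passes from $x$ to $x'$ by changing one coordinate at a time: writing $x^{(0)}=x, x^{(1)}, \dots, x^{(N)}=x'$ where $x^{(j)}$ and $x^{(j-1)}$ differ only in the $j$-th coordinate, the triangle inequality gives $|\Phi(x)-\Phi(x')| \le \sum_{j=1}^N |\Phi(x^{(j)})-\Phi(x^{(j-1)})| \le L\sum_{j=1}^N |x_j - x'_j| = L\|x-x'\|_1$, which is exactly the $\ell_1$ estimate we want. (One must check each intermediate point $x^{(j)}$ lies in $C(y,R)$, which holds since each coordinate stays in its original interval.)

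So the heart of the matter is: for a fixed coordinate direction, the slope of $\Lambda(f,C(y,R))$ along any axis-parallel segment in that direction is at most $L$. I would handle this by induction on $N$, matching the inductive definition of $\Lambda$. The key observation is that each $\Lambda_\gamma$ is an explicit convex combination (with weights $t$ and $1-t$, $t = (x_j - y_j + R/2)/R \in [0,1]$) of two lower-complexity interpolants built from $f$ restricted to opposite faces of the cube. Fixing a coordinate $i$ and a segment parallel to the $x_i$-axis: if $i$ is the "last" variable processed (the outermost step \eqref{Lambda} when $i=N$, or the corresponding step in the inductive setup), then along that segment $\Lambda(f,C(y,R))$ is literally the affine function interpolating between two values of the form $\Lambda_{(\pm 1)}(f,\dots)$ evaluated at the two endpoints, which are themselves values of $f$-type interpolants on faces; the difference of these two endpoint values is controlled because moving between the two opposite faces in direction $i$ at fixed other coordinates, using the induction hypothesis on each face and the affine structure, one compares vertices of $V$ differing only in the $i$-th slot, whose distance in $\ell_1$ is exactly $R$. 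If instead $i$ is an "earlier" variable, one uses that $\Lambda_\gamma$ depends on $x_i$ only through the lower-dimensional interpolants $\Lambda_{(\pm1,\dots)}$, and applies the induction hypothesis to those. I expect the bookkeeping of indices in this induction — keeping track of which variable is being varied versus the order in which variables are eliminated in the definition — to be the main obstacle; the geometric content is simply that opposite vertices of a face of the hypercube in a fixed coordinate direction are at $\ell_1$-distance $R$, so the discrete Lipschitz bound $L$ transfers cleanly through each convex combination.

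Finally I would assemble the pieces: the coordinatewise slope bound gives $\mathrm{Lip}(\Lambda(f,C(y,R))) \le L$ via the telescoping argument of the second paragraph, and combined with the trivial reverse inequality from the first paragraph this yields the claimed equality.
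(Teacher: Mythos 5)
Your proposal is correct and follows essentially the same route as the paper: the heart in both cases is an induction along the levels of the definition of $\Lambda$, bounding each coordinate-direction slope of $\Lambda_\gamma$ by $\Lip\left(f|_{\{A_\delta(y,R)\}}\right)$ via the key estimate $|\Lambda_\gamma-\Lambda_{\tilde\gamma}|\le R\,\Lip\left(f|_{\{A_\delta(y,R)\}}\right)$ for multi-indices differing in exactly one slot, which is precisely the paper's inequality \eqref{Lambda-Lambda}. The only cosmetic differences are that you assemble the coordinatewise bounds by telescoping one coordinate at a time using property (AF) where the paper invokes differentiability and the mean value theorem, and that you make explicit the trivial inequality $\Lip\left(f|_{\{A_\delta(y,R)\}}\right)\le\Lip\left(\Lambda\left(f,C(y,R)\right)\right)$ which the paper leaves implicit.
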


\begin{proof}  It follows clearly from its definition that  $\Lambda\left(f,C\left(y,R\right)\right)$ is differentiable in the interior of $C\left(y,R\right)$. We shall prove that for any $1\le i\le N$ and any $x$ in the interior of $C\left(y,R\right)$

\begin{equation*}
\left|\frac{\partial\Lambda\left(f,\,C\left(y,R\right)\right)}{\partial x_i}(x)\right|\leq \Lip\left(f|_{\left\{A_\delta\left(y,R\right),\ \delta\in\{-1,1\}^N\right\}}\right).
\end{equation*}
Since $\Rdb^N$ is equipped with $\|\ \|_1$, the conclusion of our lemma will then follow directly from the mean value theorem.

So let $x$ be an interior point of $C\left(y,R\right)$, that is $x$ so that $y_i-\frac{R}{2}<x_i<y_i+\frac{R}{2}$ for all $1\le i\le N$. Consider first $\gamma,\tilde{\gamma}\in\{-1,1\}^{N-1}$ such that there exists a unique $k\in\{1,\dots,N-1\}$ satisfying $\gamma_k\neq\tilde{\gamma}_k$. Then
\begin{align*}
\big|&\Lambda_\gamma\left(f,\,C\left(y,R\right)\right)(x)-\Lambda_{\tilde{\gamma}}\left(f,\,C\left(y,R\right)\right)(x)\big|\\
&=\Bigg|\left(1-\frac{x_1-y_1+\frac {R}{2}}{R}\right)\left(f\left(A_{(-1,\gamma_1,\dots,\gamma_{N-1})}\left(y,R\right)\right)-f\left(A_{(-1,\tilde{\gamma}_1,\dots,\tilde{\gamma}_{N-1})}\left(y,R\right)\right)\right)\\
&\phantom{=\Bigg|}+\frac{x_1-y_1+\frac{R}{2}}{R}\left(f\left(A_{(1,\gamma_1,\dots,\gamma_{N-1})}\left(y,R\right)\right)-f\left(A_{(1,\tilde{\gamma}_1,\dots,\tilde{\gamma}_{N-1})}\left(y,R\right)\right)\right)\Bigg|\\
&\leq R\,\Lip\left(f|_{\left\{A_\delta\left(y,R\right),\ \delta\in\{-1,1\}^N\right\}}\right).
\end{align*}
Further, one shows by induction on $j\in\{1,\dots,N-1\}$ that for all  $\gamma,\tilde{\gamma}\in\{-1,1\}^{N-j}$ such that there exists a unique $k\in\{1,\dots,N-j\}$ satisfying $\gamma_k\neq\tilde{\gamma}_k$ we have
\begin{align}
\label{Lambda-Lambda}
\big|\Lambda_\gamma\left(f,\,C\left(y,R\right)\right)&(x)-\Lambda_{\tilde{\gamma}}\left(f,\,C\left(y,R\right)\right)(x)\big|\\\nonumber
&=\Bigg|\left(1-\frac{x_j-y_j+\frac {R}{2}}{R}\right)\Big(\Lambda_{(-1,\gamma_1,\dots,\gamma_{N-j})}\left(f,\,C\left(y,R\right)\right)(x)\\\nonumber
&\phantom{=\Bigg|\left(1-\frac{x_j-y_j+\frac {R}{2}}{R}\right)\Big(}-\Lambda_{(-1,\tilde\gamma_1,\dots,\tilde\gamma_{N-j})}\left(f,\,C\left(y,R\right)\right)(x)\Big)\\\nonumber
&\phantom{=\Bigg|}+\frac{x_j-y_j+\frac {R}{2}}{R}\Big(\Lambda_{(1,\gamma_1,\dots,\gamma_{N-j})}\left(f,\,C\left(y,R\right)\right)(x)\\\nonumber
&\phantom{=\Bigg|+\frac{x_j-y_j+\frac {R}{2}}{R}\Big(}-\Lambda_{(1,\tilde\gamma_1,\dots,\tilde\gamma_{N-j})}\left(f,\,C\left(y,R\right)\right)(x)\Big)\Bigg|\\\nonumber
&\leq R\,\Lip\left(f|_{\left\{A_\delta\left(y,R\right),\ \delta\in\{-1,1\}^N\right\}}\right).\nonumber
\end{align}
Now, for $\gamma\in\{-1,1\}^{N-1}$ and $i\in\{1,\dots,N\}$,
\begin{align*}
\left|\frac{\partial\Lambda_\gamma\left(f,\,C\left(y,R\right)\right)}{\partial x_i}(x)\right|&=\left\{\begin{array}{ll}
\bigg|\frac{f\left(A_{(1,\gamma_1,\dots,\gamma_{N-1})}\left(y,R\right)\right)}{R}\\
\phantom{\bigg|}-\frac{f\left(A_{(-1,\gamma_1,\dots,\gamma_{N-1})}\left(y,R\right)\right)}{R}\bigg|&\textup{if }\,i=1,\\
0&\textup{if }\,i>1.
\end{array}
\right.
\end{align*}
Therefore
$$
\left|\frac{\partial\Lambda_\gamma\left(f,\,C\left(y,R\right)\right)}{\partial x_i}(x)\right|\leq\Lip\left(f|_{\left\{A_\delta\left(y,R\right),\ \delta\in\{-1,1\}^N\right\}}\right).
$$
Further, for $j\in\{2,\dots,N\}$, $\gamma \in\{-1,1\}^{N-j}$ and $i\in\{1,\dots,N\}$,
\begin{align*}
\left|\frac{\partial\Lambda_\gamma\left(f,\,C\left(y,R\right)\right)}{\partial x_i}(x)\right|&=\left\{\begin{array}{ll}
\bigg|\frac{R-\left(x_j-y_j+\frac {R}{2}\right)}{R}\frac{\partial\Lambda_{(-1,\gamma_1,\dots,\gamma_{N-j})}\left(f,\,C\left(y,R\right)\right)}{\partial x_i}(x)\\
\phantom{\bigg|}+\frac{x_j-y_j+\frac {R}{2}}{R}\frac{\partial\Lambda_{(1,\gamma_1,\dots,\gamma_{N-j})}\left(f,\,C\left(y,R\right)\right)}{\partial x_i}(x)\bigg|&\textup{if }i<j\\
\bigg|\frac{\Lambda_{(1,\gamma_1,\dots,\gamma_{N-j})}\left(f,\,C\left(y,R\right)\right)(x)}{R}\\
\phantom{\bigg|}-\frac{\Lambda_{(-1,\gamma_1,\dots,\gamma_{N-j})}\left(f,\,C\left(y,R\right)\right)(x)}{R}\bigg|&\textup{if }i=j\\
0&\textup{if }i>j.
\end{array}
\right.
\end{align*}
Consequently, using (\ref{Lambda-Lambda}) and an induction on $j$, one gets that for all $j\in\{1,\dots,N\}$, $i\in\{1,\dots,N\}$ and $\gamma \in\{-1,1\}^{N-j}$,
$$
\left|\frac{\partial\Lambda_\gamma\left(f,\,C\left(y,R\right)\right)}{\partial x_i}(x)\right|\leq\Lip\left(f|_{\left\{A_\delta\left(y,R\right),\
\delta\in\{-1,1\}^N\right\}}\right).
$$
This concludes the proof.

\end{proof}

We now finish setting our notation. Provided that $\varepsilon=(\varepsilon_1,\dots,\varepsilon_N)\in\{-1,1\}^N$, $y\in\rn$, $h=(h_1,\dots,h_N)\in\en^N_0$ and $k\in\en_0$, we denote
$$x^{\varepsilon,y}_{h,k}=y+2^{-k-1}\varepsilon+2^{-k}(\varepsilon_1h_1,\dots,\varepsilon_N h_N).$$
Next, if $0<R<\infty$ and $t\in \Rdb$, we define $\pi_R(t)$ to be the nearest point to $t$ in $[-\frac{R}{2},\frac{R}{2}]$. Then we define $\Pi_R^N(x)=(\pi_R(x_1),\dots,\pi_R(x_N))$ for all $x\in \Rdb^N$.
It is easily checked that $\Pi_R^N$ is a retraction from $\ell_1^N$ onto $C\left(\mathbf{0^N},R\right)$ and
that $\Lip(\Pi_R^N)=1$. In fact, $\Pi_R^N$ is the nearest point mapping to $C\left(\mathbf{0^N},R\right)$ and is 1-Lipschitz
in both $\|\ \|_1$ and $\|\ \|_2$ on $\Rdb^N$.

\medskip Finally,  we define $\rho_N$ to be the canonical projection from $\ell_1$ onto $\ell_1^N$ and $\tau_N$ to be the canonical injection from $\ell_1^N$ into $\ell_1$. Namely  $\rho_N(x)=(x_1,\dots,x_N)$ for any $x=(x_i)_{i=1}^\infty\in\ell_1$ and  $\tau_N(x)=(x_1,\dots,x_N,0,\dots)$ for every $x=(x_1,\dots,x_N)\in \ell_1^N$.

\subsection{Proof of Theorem \ref{fdd}}

We detail the argument for $\cal F(\ell_1)$. As we have announced in the note below
the formulation of Theorem \ref{fdd}, we perform first a construction of projections having the desired qualities on $\Lip_0(\ell_1)$.

So, for $f\in\Lip_0(\ell_1)$, $n\in\en$ and $x\in \ell_1$ we define

$$Q_n(f)(x)=P_n(f\circ \tau_n)(\rho_n(x)),$$
with
$$P_n(g)(u)=\Lambda\left(g,
C\left(x^{\varepsilon,\mathbf{0^n}}_{h,n-1},2^{1-n}\right)\right)(\Pi^n_{2^n}(u)),\ \ {\rm for}\ g\in \Lip_0(\ell_1^n)\ {\rm and}\ u\in \ell_1^n,$$
where $\varepsilon\in\{-1,1\}^n$ and $h\in\{0,\dots,2^{2n-2}-1\}^n$ are chosen such that $\Pi^n_{2^n}(u)\in C\left(x^{\varepsilon,\mathbf{0^n}}_{h,n-1},2^{1-n}\right)$.

\noindent Let us mention that in the above construction the symbols $x^{\varepsilon,\mathbf{0^n}}_{h,n-1}$, $C\left(x^{\varepsilon,\mathbf{0^n}}_{h,n-1},2^{1-n}\right)$ and $\Lambda\left(g,
C\left(x^{\varepsilon,\mathbf{0^n}}_{h,n-1},2^{1-n}\right)\right)$ are meant in $\Rdb^n$, or acting on $\Rdb^n$. In the sequel, the information on the dimension considered for hypercubes or for points $x^{\varepsilon,y}_{h,k}$ shall be carried by the centre of a hypercube or by $y$ respectively, which most of the time will be $\mathbf{0^n}$. Finally, we denote $V_n$ the set of all vertices of all cubes $C\left(x^{\varepsilon,\mathbf{0^n}}_{h,n-1},2^{1-n}\right)$ for $\varepsilon\in\{-1,1\}^n$ and $h\in\{0,\dots,2^{2n-2}-1\}^n$.

Before we proceed with the proof, let us describe the operator $Q_n$. The hypercube $C(\mathbf{0^n},2^n)$ of
$\ell_1^n$ is split into small hypercubes of edge length equal to $2^{1-n}$. If $x$ belongs to one of the small
hypercubes, then $Q_n(f)(x)$ is the value obtained by performing the interpolation $\Lambda$ for the restriction
of $f$ to the vertices of this small hypercube. If $x$ does not belong to $C(\mathbf{0^n},2^n)$, then $Q_n(f)(x)$
is defined to be $Q_n(f)(r_n(x))$, where $r_n=\Pi_{2^n}^n\circ \rho_n$ is the natural retraction from $\ell_1$
onto $C(\mathbf{0^n},2^n)$. In rough words, let us say that as we go from step $n$ to
step $n+1$, we perform the three following operations: we add one dimension to our
hypercubes, we double the edge length of the large hypercube and divide by two the edge length of the
small hypercubes.

\medskip We now make a simple observation.

\begin{Lemma}\label{af} Let $m>n$ in $\Ndb$. Assume that $g\in {\rm Lip}_0(\ell_1^n)$. Then the function
$P_n(g)$ has the property (AF) on each hypercube $C\left(x^{\varepsilon,\mathbf{0^n}}_{h,m-1},2^{1-m}\right)$
where $\eps \in \{-1,1\}^n$ and $h \in \Ndb_0^n$ (note here that these hypercubes are considered in $\Rdb^n)$.
\end{Lemma}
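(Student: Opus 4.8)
The plan is to reduce everything to the observation, recorded just before Lemma~\ref{Lip constant of Lambda}, that $\Lambda(g,C)$ has property (AF) on a cube $C$ and coincides with $g$ at the vertices of $C$, together with the uniqueness of an (AF) function on a box given its vertex values. First I would record the compatibility fact underlying the definition of $P_n$: if $D$ and $D'$ are two of the small level-$n$ cubes $C(x^{\varepsilon',\mathbf{0^n}}_{h',n-1},2^{1-n})$ (with $\varepsilon'\in\{-1,1\}^n$ and $h'\in\{0,\dots,2^{2n-2}-1\}^n$) which tile $C(\mathbf{0^n},2^n)$, then $\Lambda(g,D)$ and $\Lambda(g,D')$ agree on the common face $D\cap D'$: that face is an axis-parallel box, the two restrictions have property (AF) on it, and they take the value $g$ at each of its vertices, hence they coincide there. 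Thus $\Lambda_g:=\Lambda(g,D)$ on $D$ defines a single function on $C(\mathbf{0^n},2^n)$, and $P_n(g)=\Lambda_g\circ\Pi^n_{2^n}$ on all of $\ell_1^n$.

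Next I would describe how $\Pi^n_{2^n}$ acts on a fixed hypercube $Q=C(x^{\varepsilon,\mathbf{0^n}}_{h,m-1},2^{1-m})$ ($\varepsilon\in\{-1,1\}^n$, $h\in\Ndb_0^n$), viewed in $\Rdb^n$. Its $i$-th edge is an interval $[a_i,b_i]$ with $a_i,b_i$ consecutive points of $2^{1-m}\Zdb$, while $\pm2^{n-1}\in2^{1-m}\Zdb$; so for each $i$ either $[a_i,b_i]\subseteq[-2^{n-1},2^{n-1}]$, in which case $\pi_{2^n}$ is the identity on $[a_i,b_i]$, or $[a_i,b_i]$ lies entirely on one side of $[-2^{n-1},2^{n-1}]$, in which case $\pi_{2^n}\equiv\pm2^{n-1}$ there. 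Hence $\Pi^n_{2^n}|_Q$ equals the map $\phi$ that is the identity on the ``free'' coordinates $i\in J$ (those of the first kind) and constantly $\sigma_i=\pm2^{n-1}$ on the others, so that $\phi(Q)$ is the box $Q'=\prod_{i\in J}[a_i,b_i]\times\prod_{i\notin J}\{\sigma_i\}$; moreover $\phi$ carries any segment of $Q$ parallel to a coordinate axis onto a segment of $Q'$ parallel to the same axis (or onto a point), by an affine map.

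The geometric core of the argument is that $Q'$ lies in a single small level-$n$ cube $\widetilde C$. Since $m>n$ we have $2^{1-n}\Zdb\subseteq2^{1-m}\Zdb$, so no edge $[a_i,b_i]$ can straddle a level-$n$ grid hyperplane; thus for $i\in J$ the interval $[a_i,b_i]$ sits in a single level-$n$ cell interval $[c_i,d_i]\subseteq[-2^{n-1},2^{n-1}]$, while for $i\notin J$ we take $[c_i,d_i]\subseteq[-2^{n-1},2^{n-1}]$ to be the level-$n$ cell interval having $\sigma_i$ as an endpoint. Then $\widetilde C=\prod_i[c_i,d_i]$ is one of the tiling cubes and $Q'\subseteq\widetilde C$, so $\Lambda_g$ agrees on $Q'$ with $\Lambda(g,\widetilde C)$, which has property (AF) on $\widetilde C\supseteq Q'$. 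Composing, $P_n(g)|_Q=\Lambda_g\circ\phi|_Q$ is affine along every coordinate direction of $Q$, i.e. has property (AF) on $Q$.

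I expect the only genuine difficulty to be the boundary bookkeeping: hypercubes $Q$ protruding out of $C(\mathbf{0^n},2^n)$ and hypercubes $Q$ whose image under $\Pi^n_{2^n}$ meets a level-$n$ grid hyperplane. This is precisely what the two ingredients above neutralise — the agreement of the $\Lambda(g,\cdot)$'s on common faces removes the apparent ambiguity in the cube chosen in the definition of $P_n$, and the divisibility $2^{1-n}\Zdb\subseteq2^{1-m}\Zdb$ forces $Q'$ to stay within one level-$n$ cube. Everything else is the coordinatewise description of $\Pi^n_{2^n}$ and a direct appeal to property (AF) of $\Lambda(g,\widetilde C)$.
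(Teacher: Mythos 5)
Your proposal is correct and follows essentially the same route as the paper: analyse $\Pi^n_{2^n}$ coordinatewise on the small level-$m$ cube, observe that it sends axis-parallel segments affinely to points or axis-parallel segments, and that the image lands inside a single level-$n$ tile (or a face of one), where the interpolant has property (AF). You merely make explicit several points the paper leaves as ``easily checked'' --- notably the grid-refinement fact $2^{1-n}\Zdb\subseteq 2^{1-m}\Zdb$ and the consistency of the $\Lambda(g,\cdot)$'s on common faces, which justifies the well-definedness of $P_n$.
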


\begin{proof} The assertion is clear if the hypercube $C\left(x^{\varepsilon,\mathbf{0^n}}_{h,m-1},2^{1-m}\right)$ lies inside $C\left(\mathbf{0^n},2^{n}\right)$. Assume now that it is not the case. First, it is easily checked that $\Pi_{2^n}^n$ has the property (AF) on $C\left(x^{\varepsilon,\mathbf{0^n}}_{h,m-1},2^{1-m}\right)$. Besides, the image by $\Pi_{2^n}^n$ of a segment in $C\left(x^{\varepsilon,\mathbf{0^n}}_{h,m-1},2^{1-m}\right)$ that is parallel to a coordinate axis is either a point or a segment parallel to a coordinate axis. Finally, $\Pi_{2^n}^n\left(C\left(x^{\varepsilon,\mathbf{0^n}}_{h,m-1},2^{1-m}\right)\right)$ is included in a face of one of the hypercubes in the tilling of $C\left(\mathbf{0^n},2^{n}\right)$. On this face $P_n(g)$ has the property (AF). The conclusion follows.

\end{proof}

Let us proceed with the proof of Theorem \ref{fdd}. Fix $n\in \Ndb$. First, it is clear that
$$
Q_n(f)(\mathbf{0})=f(\mathbf{0})=0.
$$
Then, using Lemma \ref{Lip constant of Lambda} and the fact that $1=\Lip(\tau_n)=\Lip(\rho_n)=\Lip(\Pi^n_{2^n})$,
we get that for all $x,y \in \ell_1$
\begin{align*}
|Q_n(f)(x)-Q_n(f)(y)|&\leq \|f\circ\tau_n\|_{\Lip_0(\ell_1)}\left\|\Pi^n_{2^n}(\rho_n(x))-\Pi^n_{2^n}(\rho_n(y))\right\|_1\\
&\leq \|f\|_{\Lip_0(\ell_1)}\left\|x-y\right\|_1.
\end{align*}
The map $f \mapsto \Lambda\left(f,C\left(y,R\right)\right)$ is clearly linear. Then, the linearity of $Q_n$ follows easily. Moreover, $Q_n(f)$ is uniquely determined by the values of $f$ at the elements of the finite set $V_n$. Hence $Q_n:\Lip_0(\ell_1)\to\Lip_0(\ell_1)$ is a well defined finite-rank linear operator with $\|Q_n\|\leq1$.

Consider now $m,n\in\en$ so that $m\leq n$. Then $Q_n(f)\circ\tau_m=f\circ\tau_m$ on $V_m$. Indeed, for $A=(A_1,\dots,A_m)\in V_m$, we have that $\rho_n(\tau_m(A))\in V_n$. So
\begin{align*}
Q_n(f)(\tau_m(A))=f(\tau_n(A_1,\dots,A_m,\underbrace{0,\dots,0}_{n-m}))=f(\tau_m(A)).
\end{align*}
Thus $Q_m(Q_n(f))=Q_m(f)$ on $\ell_1$ by definition.

Suppose now $m>n$ and assume that $j\in\{1,\dots,m\}$, $\lambda\in[0,1]$ and that $x,\tilde x\in C\left(x^{\varepsilon,\mathbf{0^m}}_{h,m-1},2^{1-m}\right)$, where $\varepsilon\in\{-1,1\}^m$ and $h\in\{0,\dots,2^{2m-2}-1\}^m$, are such that $x_i=\tilde x_i$ for $i\neq j$. Then
\begin{align*}
Q_n(f)(\tau_m(\lambda x+(1-\lambda)\tilde x))&=P_n(f\circ\tau_n)(\rho_n(\tau_m(\lambda x+(1-\lambda)\tilde x)))\\
&=P_n(f\circ\tau_n)(\lambda\rho_n(\tau_m(x))+(1-\lambda)\rho_n(\tau_m(\tilde x)))\\
&=\lambda P_n(f\circ\tau_n)(\rho_n(\tau_m(x)))\\
&\phantom{=}+(1-\lambda)P_n(f\circ\tau_n)(\rho_n(\tau_m(\tilde x)))\\
&=\lambda Q_n(f)(\tau_m(x))+(1-\lambda)Q_n(f)(\tau_m(\tilde x)).
\end{align*}
In the above we have used that $\rho_n$ and $\tau_m$ are affine, that
$$\rho_n\left(\tau_m\left(C\left(x^{\varepsilon,\mathbf{0^m}}_{h,m-1},2^{1-m}\right)\right)\right)
=C\left(x^{\left(\varepsilon_1,\dots,\varepsilon_n\right),\mathbf{0^n}}_{\left(h_1,\dots,h_n\right),
m-1},2^{1-m}\right)$$ and the fact that $P_n(f\circ \tau_n)$ has the property (AF) on  $C\left(x^{\left(\varepsilon_1,\dots,\varepsilon_n\right),\mathbf{0^n}}_{\left(h_1,\dots,h_n\right),
m-1},2^{1-m}\right)$ (see Lemma \ref{af}). So, $Q_n(f)\circ\tau_m$ has the property (AF) on each hypercube $C\left(x^{\varepsilon,\mathbf{0^m}}_{h,m-1},2^{1-m}\right)$, where $\varepsilon\in\{-1,1\}^m$ and $h\in\{0,\dots,2^{2m-2}-1\}^m$.

\noindent Therefore, we obtain that for all $x\in\ell_1$ and $f\in \Lip_0(\ell_1)$
\begin{align*}
Q_m(Q_n(f))(x)&=Q_n(f)\left(\tau_m\left(\Pi^m_{2^m}\left(\rho_m(x)\right)\right)\right)\\
&=P_n(f\circ\tau_n)\left(\rho_n\left(\tau_m\left(\Pi^m_{2^m}\left(\rho_m(x)\right)\right)\right)\right)\\
&=P_n(f\circ\tau_n)\left(\pi_{2^m}(x_1),\dots,\pi_{2^m}(x_n)\right)\\
&=P_n(f\circ\tau_n)\left(\Pi^n_{2^n}\left(\pi_{2^m}(x_1),\dots,\pi_{2^m}(x_n)\right)\right).
\end{align*}

\noindent We now use the fact that $\pi_{2n}\pi_{2m}=\pi_{2n}$ to get

\begin{align*}
Q_m(Q_n(f))(x)&=P_n(f\circ\tau_n)\left(\pi_{2^n}(x_1),\dots,\pi_{2^n}(x_n)\right)\\
&=P_n(f\circ\tau_n)\left(\Pi^n_{2^n}\left(\rho_n(x)\right)\right)=Q_n(f)(x).
\end{align*}
Hence the formula $Q_mQ_n=Q_{\min\{m,n\}}$ is also satisfied for $m>n$.

By construction, for each $n$ in $\en$, $Q_n$ is continuous on $\Lip_0(\ell_1)$ endowed with the topology of pointwise convergence, thus it is also weak$^*$ to weak$^*$ continuous on $\Lip_0(\ell_1)$.

Furthermore, $\big(Q_n(f)\big)_{n=1}^\infty$ converges pointwise to $f$ for every $f\in\Lip_0(\ell_1)$. Indeed, for  given $f\in\Lip_0(\ell_1)$, $x\in\ell_1$ and $\eta>0$, we can find $n_0\in\en$ such that for all $n\ge n_0$ $$\|f\|_{\Lip_0(\ell_1)}\sum_{i=n+1}^{\infty}|x_i|<\frac{\eta}{4},\ \ \rho_n(x)\in
C(\mathbf{0^n},2^n)\ \ {\rm and}\ \ n2^{1-n}\|f\|_{\Lip_0(\ell_1)}<\frac{\eta}{4}.$$
Thus, for any $n\geq n_0$, we get
\begin{align*}
|Q_n(f)(x)-f(x)|&\leq|Q_n(f)(x)-Q_n(f)(\tau_n(\rho_n(x)))|\\
&\phantom{\leq}+|Q_n(f)(\tau_n(\rho_n(x)))-f(\tau_n(A))|\\
&\phantom{\leq}+|f(\tau_n(A))-f(\tau_n(\rho_n(x)))|+|f(\tau_n(\rho_n(x)))-f(x)|,
\end{align*}
where $A\in\er^n$ is a vertex of a hypercube $C\left(x^{\varepsilon,\mathbf{0^n}}_{h,n-1},2^{1-n}\right)$, with $\varepsilon\in\{-1,1\}^n$ and $h\in\{0,\dots,2^{2n-2}-1\}^n$, containing $\rho_n(x)$.

\noindent Since $f$ and $Q_n(f)$ are $\|f\|_{\Lip_0(\ell_1)}$-Lipschitz and $\|\tau_n(A)-\tau_n(\rho_n(x))\|_{1} \le n2^{1-n}$, we deduce that

\begin{align*}
|Q_n(f)(x)-f(x)|&\leq 2\|f\|_{\Lip_0(\ell_1)}\left(\|\tau_n(\rho_n(x))-x\|_{1}+
\|\tau_n(A)-\tau_n(\rho_n(x))\|_{1}\right)\\
&\leq 2\|f\|_{\Lip_0(\ell_1)}\left(\sum_{i=n+1}^{\infty}{|x_i|}+n2^{1-n}\right)<\eta.
\end{align*}

Now, it follows from the weak$^*$ continuity of $Q_n$ that $Q_n=S_n^*$, where $(S_n)_{n=1}^\infty$ is a sequence of finite-rank bounded linear projections on $\mathcal{F}(\ell_1)$. The sequence $(S_n)_{n=1}^\infty$ satisfies that $\|S_n\|\leq 1$ for each $n\in\en$ and that $S_mS_n=S_{\min\{m,n\}}$ for every $m,n\in\en$.

The fact that $(Q_n)_{n=1}^\infty$ converges to the identity with respect to the weak$^*$ operator topology then implies that $(S_n(\mu))_{n=1}^\infty$ converges weakly to $\mu$ for every $\mu\in\mathcal{F}(\ell_1)$. Therefore $\overline{\bigcup\limits_{n=1}^{\infty}S_n(\mathcal{F}(\ell_1))}=
\mathcal{F}(\ell_1)$. In view of these properties, the sequence $(S_n)_{n=1}^\infty$ determines a monotone FDD of $\mathcal{F}(\ell_1)$. The proof of Theorem \ref{fdd} is now complete.

\medskip\noindent {\bf Remark.} The proof for $\ell_1^N$ is clearly simpler and the sequence $(Q_n)_{n=1}^\infty$ can be directly given by
$$Q_n(f)(x)=\Lambda\left(f,
C\left(x^{\varepsilon,\mathbf{0^N}}_{h,n-1},2^{1-n}\right)\right)\left(\Pi^N_{2^n}(x)\right),$$
where $\varepsilon\in\{-1,1\}^N$ and $h\in\{0,\dots,2^{2n-2}-1\}^N$ are such that $\Pi^N_{2^n}(x)\in C\left(x^{\varepsilon,\mathbf{0^N}}_{h,n-1},2^{1-n}\right)$.

\medskip\noindent {\bf Aknowledgements.} Both authors wish to thank G. Godefroy and P. H\'{a}jek for very useful discussions.

\end{document}